% document information
\documentclass[12pt]{article}%Version Sept. 28, 2002
\author{Anirban Das}
\title{Modeling Avalanches of Neurons}

% packages
%\usepackage{amsmath} % needed for \eqref
%\usepackage{hyperref} % needed for \autoref
\usepackage{makeidx} % needed for index
\usepackage[titletoc,title]{appendix}
\usepackage{bbm}
\usepackage{amssymb}
\usepackage{amsmath,amsfonts,amssymb,amsthm,epsfig,epstopdf,titling,url,array}
\makeindex
% page settings
\usepackage[left=2cm,right=2cm,top=3cm,bottom=4cm]{geometry} % sets margins
\parindent=1cm % sets space at the beginning of every paragraph

% own commands
 % bold and emphasized indexed word
 % bold and emphasized indexed word
\theoremstyle{plain}
\newtheorem{thm}{Theorem}[section]
\newtheorem{lem}[thm]{Lemma}

\theoremstyle{definition}
\newtheorem{defn}{Definition}[section]

\theoremstyle{remark}

%%%%%%%%%%%%%%%%%%%%%%%%%%%%%%%%%%%%%%%%%%%%%%%%%%%%%%%%%%%%%%%%%%%%%%%%%%%%%%%%%%%%%%%%%%%%

\begin{document}

\date{}
\title
{An analytic derivation of the variance for the Abelian distribution}
\author{Anirban Das, \\
Department of Mathematics, Pennsylvania State University\\
anirban.das.psu.math@gmail.com
}

\maketitle

\begin{abstract}{ The Abelian distribution has been studied recently in models for neural avalanches. This paper uncovers new properties about the distribution, ways in which these properties can be useful are indicated.}
\end{abstract}

{\tiny AMS Classification:  60E05, 92C20, 05A10, 62E15. }
{\tiny Key words: Abelian distribution, neural avalanches, discrete distributions.}

\section{Introduction}
The Abelian distribution is a distribution that is in important in models studying neural avalanches (See \cite{eurich2002finite}, \cite{levina2008mathematical} \cite{levina2014abelian}). Neural avalanches were observed  by John Beggs and
Dietmar Plenz (\cite{beggs2003neuronal}, \cite{beggs2004neuronal}). In the experiment cultured slices  were planted on a multielectrode array and local field potential signals were recorded. The data consisted of short intervals of activity, when one or more electrodes detected LFPs above the threshold, separated by longer periods of silence. A set of such consecutively
active frames was called an avalanche. The size of an avalanche is defined as
the number of electrodes which were active during the avalanche. The data collected showed that avalanche sizes followed the power-law distribution (with exponent $-\frac{3}{2}$) with the exponential cutoff at the
size of the multielectrode array. Neuronal avalanches have also been recently identified in vivo in the normalized LFPs extracted from ongoing activity in awake macaque monkeys(\cite{petermann2009spontaneous}). A model for such neural avalanches was studied in 
\cite{eurich2002finite}, in the model the probability distribution of getting avalanches of  size $Z_{N,p}$, was derived as $$P(Z_{N,p} = b)= C_{N,p}{{N-1}\choose{b-1}}p^{b-1}(1-bp)^{(N-b-1)}b^{b-2}$$, where $C_{N,p}$ is the normalization constant defined by 
$C_{N,p}= \frac{1-Np}{1-(N-1)p} .$ This is the Abelian distribution. The  parameter $N$ is the number of neurons, the parameter $p= \frac{\alpha}{N}$, where $\alpha \in (0,1)$ captures the amount of dissipation in the system. 

In \cite{denker2014ergodicity} a model for studying avalanches in dynamical systems was constructed, and a similar distribution was derived, we shall call it the Avalanche distribution. In \cite{levina2008mathematical}, the Mean of the Abelian distribution was calculated. This paper calculates the variance of the Abelian distribution analytically. We shall use properties of the Avalanche distribution, for doing this. This underlines the close relationship between the two distributions. At the end of \cite{denker2014ergodicity}, there is a note by Wenbo Li(\cite{WenboVLi}), where Li states some results (without proof)  and claims to have found a way for calculating the variance(does not give an explicit formula) of the Abelian distribution. Unfortunately Dr Li died before the results could be published. Here we prove some of Dr Li's claims, and use them to calculate the Variance of the Abelian distribution.  Also we show what happens to the variance as the parameter $N$(\ref{def_ab_dist}) goes to $ + \infty$.

The first Section, deals with the calculation of the Variance for a given $N$ and  $p$. the second section introduces the Stirling numbers.  Some properties of the Stirling numbers are derived. These will be used in Section 3, to study how the variance of the distribution behaves as the parameter $N$ goes to infinity. Finally he Appendix contains some technical proofs that have been excluded from the main text.

\section{The variance of the Abelian distribution}
\begin{defn}{ \textbf{ Abelian Distribution }}{\label{def_ab_dist}}
\\ The Abelian distribution $Z_{N,p}$ is a probability distribution on $\{1,2, \cdots ,N\}$defined by the probability density
$$P(Z_{N,p} = b)= C_{N,p}{{N-1}\choose{b-1}}p^{b-1}(1-bp)^{(N-b-1)}b^{b-2}$$ Where $C_{N,p}$ is the normalization constant defined by 
$C_{N,p}= \frac{1-Np}{1-(N-1)p} .$ The parameter $N$ must be an integer, the parameter $p$ lies in $(0,\frac{1}{N})$.
\end{defn}
That this is indeed is a distribution was proved in \cite{levina2008mathematical}, see also \cite{levina2014abelian}. The $p$ in the Abelian distribution is often taken as $\frac{\alpha}{N}$, where $0<\alpha < 1$. It was also proved in \cite{levina2008mathematical}, \cite{levina2014abelian} that :
\begin{lem}\label{ab_mean}
$ E(Z_{N,\frac{\alpha}{N}})= \frac{N}{N-(N-1)\alpha}$.
\end{lem}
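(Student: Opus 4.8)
The plan is to reduce the mean to a single Abel‑type binomial sum and evaluate that sum by pairing the already‑known normalization with one clean application of Abel's identity. Writing $p=\alpha/N$ throughout and using $b\cdot b^{b-2}=b^{b-1}$, the definition gives
\[
E(Z_{N,p}) = C_{N,p}\sum_{b=1}^{N}\binom{N-1}{b-1}b^{b-1}p^{b-1}(1-bp)^{N-b-1} =: C_{N,p}\,S_1 .
\]
Since it is already established (in the cited work) that the density sums to one, I may treat the companion sum $S_0:=\sum_{b}\binom{N-1}{b-1}b^{b-2}p^{b-1}(1-bp)^{N-b-1}$ as known, namely $S_0=C_{N,p}^{-1}=\frac{1-(N-1)p}{1-Np}$. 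So the entire task is to compute $S_1$.

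The engine is the Abel binomial identity $\sum_{k=0}^{n}\binom{n}{k}x(x+kz)^{k-1}(y-kz)^{n-k}=(x+y)^{n}$, which I will state and, if a referee wishes, verify by induction on $n$. Specializing to $x=z=p$, $y=1-p$, $n=N-1$, the $k$-th term becomes $\binom{N-1}{b-1}b^{b-2}p^{b-1}(1-bp)^{N-b}$ after the substitution $b=k+1$, while the right-hand side is $(x+y)^{N-1}=1$. This yields the auxiliary identity
\[
T_1:=\sum_{b=1}^{N}\binom{N-1}{b-1}b^{b-2}p^{b-1}(1-bp)^{N-b}=1 .
\]
This is the only genuinely new computation, and it is where the care is needed: one must line up the index shift $b=k+1$, confirm that the first factor produces the exponent $b-2$ and the second the exponent $N-b$, and check that the $k=0$ term is harmless, since there the pole $(x+kz)^{k-1}=p^{-1}$ is exactly cancelled by the prefactor $x=p$.

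Finally I will split off one factor of $(1-bp)$ to connect $T_1$ with $S_0$ and $S_1$. From $(1-bp)^{N-b}=(1-bp)^{N-b-1}(1-bp)$ and $b\cdot b^{b-2}=b^{b-1}$, expanding $1-bp$ gives the algebraic relation $T_1=S_0-p\,S_1$. Solving, $pS_1=S_0-T_1=\frac{1-(N-1)p}{1-Np}-1=\frac{p}{1-Np}$, so $S_1=\frac{1}{1-Np}$. Hence
\[
E(Z_{N,p})=C_{N,p}S_1=\frac{1-Np}{1-(N-1)p}\cdot\frac{1}{1-Np}=\frac{1}{1-(N-1)p},
\]
and substituting $p=\alpha/N$ turns this into $\frac{N}{N-(N-1)\alpha}$, as claimed. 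The main obstacle is entirely the bookkeeping in the Abel specialization that establishes $T_1=1$; everything after that is a single line of algebra.
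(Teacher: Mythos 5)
Your proof is correct. There is, however, nothing in the paper to compare it against: the paper does not prove this lemma at all, but simply imports it from \cite{levina2008mathematical}, \cite{levina2014abelian} (just as it imports the normalization $C_{N,p}$). So your contribution is a genuinely self-contained derivation where the paper has only a citation. Checking it line by line: the form of Abel's identity you invoke, $\sum_{k=0}^{n}\binom{n}{k}x(x+kz)^{k-1}(y-kz)^{n-k}=(x+y)^{n}$, is the standard one (easily verified for small $n$); with $x=z=p$, $y=1-p$, $n=N-1$ and the shift $b=k+1$, the general term is $\binom{N-1}{b-1}p^{b-1}b^{b-2}(1-bp)^{N-b}$ as you claim, the $k=0$ pole is indeed cancelled by the prefactor $x$, and so $T_1=1$. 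The splitting $(1-bp)^{N-b}=(1-bp)^{N-b-1}(1-bp)$ together with $b\cdot b^{b-2}=b^{b-1}$ gives $T_1=S_0-pS_1$, hence $pS_1=\frac{1-(N-1)p}{1-Np}-1=\frac{p}{1-Np}$, $S_1=\frac{1}{1-Np}$, and $E(Z_{N,p})=C_{N,p}S_1=\frac{1}{1-(N-1)p}=\frac{N}{N-(N-1)\alpha}$ at $p=\alpha/N$. Your argument is also economical in a specific way: by treating the normalization $S_0=C_{N,p}^{-1}$ as known (which is legitimate, since the paper itself cites that fact), you need only a single application of Abel's identity, whereas a fully from-scratch proof would require a second, slightly more delicate Abel-type evaluation to establish $S_0$ itself. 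This is in the same spirit as the arguments in the cited Levina--Herrmann work, but packaged as a clean two-sum bookkeeping identity $T_1=S_0-pS_1$, which could readily be stated in the paper as a short appendix proof alongside the proof of Lemma \ref{av_mean}.
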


In \cite{denker2014ergodicity} we find the following distribution:
\begin{defn}{ \textbf{ Avalanche Distribution }}
The Avalanche distribution $X_{N,p}$ is a probability distribution on $\{0,1,2, \cdots ,N\}$ defined by the probability density
$$P(X_{N,p} = b)= {{N}\choose{b}}p^{b}(1-(b+1)p)^{N-b}(b+1)^{b-1}.$$The parameter $N$ must be an integer, the parameter $p$ lies in $(0,\frac{1}{N})$.
\end{defn}
Wenbo Li in \cite{WenboVLi} states without proof the following result about the mean of the Avalanche distribution. We attach a proof of the statement in the Appendix.
\begin{lem}{\label{av_mean}}
 $ E(X_{N,p})= \sum\limits_{i=1}^{N} \frac{N !}{(N-i)!} p^i.$
\end{lem}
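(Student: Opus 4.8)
The plan is to recognize $E(X_{N,p})$ as a specialization of an Abel-type binomial sum and to evaluate it through a recursion in $N$. First I would absorb the awkward factor $(1-(b+1)p)^{N-b}$ into the rest of the summand by setting $y=\frac{1-p}{p}$, so that $1+y=\frac1p$ and $1-(b+1)p=p(y-b)$. With this substitution the probability mass rewrites cleanly as $P(X_{N,p}=b)=p^{N}\binom{N}{b}(b+1)^{b-1}(y-b)^{N-b}$, and hence $E(X_{N,p})=p^{N}T_N(y)$, where $T_N(y):=\sum_{b=0}^{N} b\binom{N}{b}(b+1)^{b-1}(y-b)^{N-b}$. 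The whole problem thus reduces to finding a closed form for $T_N(y)$, after which I substitute $1+y=\frac1p$ and re-index to recover the falling-factorial sum.

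The engine of the computation is Abel's binomial theorem in the form $S_N(y):=\sum_{b=0}^{N}\binom{N}{b}(b+1)^{b-1}(y-b)^{N-b}=(1+y)^{N}$ (this is also exactly what makes the Avalanche density sum to one, so the same identity does double duty). I would invoke this classical identity and use it twice. The idea is to peel the weight $b$ off $T_N$ by writing $b=N-(N-b)$ and using $(N-b)\binom{N}{b}=N\binom{N-1}{b}$; this expresses $T_N$ in terms of the auxiliary sum $V_m(y):=\sum_{b=0}^{m}\binom{m}{b}(b+1)^{b-1}(y-b)^{m+1-b}$ with $m=N-1$, giving $T_N(y)=N(1+y)^{N}-N\,V_{N-1}(y)$.

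Next I would handle $V_m$ by splitting its extra factor as $(y-b)^{m+1-b}=(y-b)(y-b)^{m-b}$ and distributing the linear term, which turns $V_m$ into $y\,S_m(y)$ minus a fresh copy of $T_m$; by Abel's identity this is $V_m(y)=y(1+y)^{m}-T_m(y)$. Substituting back collapses the $(1+y)$ powers and yields the clean recursion $T_N(y)=N(1+y)^{N-1}+N\,T_{N-1}(y)$ with $T_0(y)=0$. Dividing by $N!$ telescopes this to $T_N(y)=N!\sum_{j=0}^{N-1}\frac{(1+y)^{j}}{j!}$, and putting $1+y=\frac1p$ together with the reindexing $i=N-j$ gives exactly $E(X_{N,p})=\sum_{i=1}^{N}\frac{N!}{(N-i)!}p^{i}$.

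I expect the main obstacle to be the bookkeeping in the recursion rather than any single hard idea: the factor $b$ and the shifted exponents must be manipulated so that the auxiliary sum $V_m$ folds back into a smaller copy of $T$, and one has to track the upper limits and the boundary terms (e.g. the vanishing $b=N$ term) so that the two applications of Abel's identity line up. Checking $(N-b)\binom{N}{b}=N\binom{N-1}{b}$ and the base case $T_0=0$ (equivalently $T_1=1$) is routine, but it is essential that the $(1+y)$-power coming from $N\,S_N$ and the one coming from $y\,S_{N-1}$ cancel down to the single term $N(1+y)^{N-1}$, since this is precisely what makes the recursion solvable in closed form.
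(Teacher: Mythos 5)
Your proof is correct, but it takes a genuinely different route from the paper's. The paper argues probabilistically: it invokes the layered construction of \cite{denker2014ergodicity}, writing the avalanche size as $S_{N,p}=\sum_{k=1}^{N}\epsilon_{k,N}$ where $\epsilon_{k,N}$ is the size of the $k$-th generation of firings, and then proves $E(\epsilon_{k,N})=\frac{N!}{(N-k)!}p^{k}$ by induction on $k$ (conditioning on the previous generation); the lemma follows by summing over $k$, so each term $\frac{N!}{(N-i)!}p^{i}$ acquires a direct meaning as the expected size of generation $i$. You instead work directly with the density: after the substitution $1+y=\frac1p$, you reduce everything to Abel's binomial identity $\sum_{b=0}^{N}\binom{N}{b}(b+1)^{b-1}(y-b)^{N-b}=(1+y)^{N}$, peel off the weight $b$ via $(N-b)\binom{N}{b}=N\binom{N-1}{b}$, and derive the recursion $T_N(y)=N(1+y)^{N-1}+NT_{N-1}(y)$, which telescopes to the stated formula; I checked the algebra (the cancellation to the single term $N(1+y)^{N-1}$, the base case $T_0=0$, and the final reindexing) and it is sound. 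The trade-off: your argument is self-contained and purely combinatorial, needing only the classical Abel identity (which, as you note, is the same identity that normalizes the density), whereas the paper's argument leans on the distributional representation from \cite{denker2014ergodicity} but in exchange explains \emph{why} the answer is a sum of falling-factorial terms, giving each summand a probabilistic interpretation within the avalanche dynamics.
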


Also define $Y_{N,p} \; = \; X_{N,p}+1$.
Thus 
\begin{eqnarray*}
P(Y_{N,p} = b) &=& P(X_{N,p} = b-1)\\
& =& {{N}\choose{b-1}}p^{b-1}(1-bp)^{(N-b+1)}b^{b-2}.
\end{eqnarray*}

With these results at hand we are ready to find the Variance of $Z_{N,p}$
\begin{thm}
The second moment of the Abelian distribution is as follows:
\begin{equation}\label{eq2_2}
   E({Z_{N,p}}^{2})= \frac{{C_{N,p}}}{p}[\frac{1}{1-Np}-1-\sum\limits_{i=1}^{N-1} {\frac{(N-1)!}{(N-1-i)!}p^i}]
\end{equation}
And the variance of the distribution is
\begin{equation}
V(Z_{N,p})=\frac{{C_{N,p}}}{p}[\frac{1}{1-Np}-1-\sum\limits_{i=1}^{N-1} {\frac{(N-1)!}{(N-1-i)!}p^i}] - {(\frac{N}{N-(N-1)\alpha})}^{2}.
\end{equation}

 \end{thm}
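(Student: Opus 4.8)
The plan is to reduce the computation of $E(Z_{N,p}^2)$ to quantities already in hand—the mean of the Abelian distribution (Lemma \ref{ab_mean}) and the mean of the avalanche distribution (Lemma \ref{av_mean})—by exploiting a pointwise identity between the density of $Y_{N-1,p}$ and the density of $Z_{N,p}$. Write $a_b = \binom{N-1}{b-1}p^{b-1}(1-bp)^{N-b-1}b^{b-2}$ for the unnormalized Abelian weight, so that $P(Z_{N,p}=b) = C_{N,p}\,a_b$. The first observation is that
$$P(Y_{N-1,p}=b) = (1-bp)\,a_b, \qquad b = 1,\dots,N.$$
This follows directly from the displayed formula for $P(Y_{N,p}=b)$ with $N$ replaced by $N-1$: the two expressions differ only in the exponent of $(1-bp)$, namely $N-b$ for $Y_{N-1,p}$ versus $N-b-1$ for $a_b$. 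I would verify the endpoint $b=N$ separately, since there $a_N$ carries a factor $(1-Np)^{-1}$; multiplying by $(1-bp)$ cancels it and reproduces $P(Y_{N-1,p}=N)=p^{N-1}N^{N-2}$, so the identity is genuinely valid on the whole range. One also notes $p<1/N<1/(N-1)$, so $Y_{N-1,p}$ is a legitimate distribution.

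Next I would take the first moment of this identity. Summing $b\cdot P(Y_{N-1,p}=b)$ over $b$ gives
$$E(Y_{N-1,p}) = \sum_{b=1}^{N} b\,a_b - p\sum_{b=1}^{N} b^2\,a_b = \frac{E(Z_{N,p})}{C_{N,p}} - p\,\frac{E(Z_{N,p}^2)}{C_{N,p}}.$$
The coefficient $\sum_b b\,a_b = E(Z_{N,p})/C_{N,p}$ is evaluated from Lemma \ref{ab_mean}: with $\alpha=Np$ one has $E(Z_{N,p}) = 1/(1-(N-1)p)$, and dividing by $C_{N,p}=(1-Np)/(1-(N-1)p)$ yields the clean value $1/(1-Np)$. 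On the left, $E(Y_{N-1,p}) = 1 + E(X_{N-1,p})$, and Lemma \ref{av_mean} applied with $N\mapsto N-1$ gives $E(X_{N-1,p}) = \sum_{i=1}^{N-1}\frac{(N-1)!}{(N-1-i)!}p^i$.

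Solving the displayed equation for $E(Z_{N,p}^2)$ then produces exactly \eqref{eq2_2}:
$$E(Z_{N,p}^2) = \frac{C_{N,p}}{p}\Big[\frac{1}{1-Np} - E(Y_{N-1,p})\Big] = \frac{C_{N,p}}{p}\Big[\frac{1}{1-Np} - 1 - \sum_{i=1}^{N-1}\frac{(N-1)!}{(N-1-i)!}p^i\Big].$$
The variance formula is then immediate upon subtracting $E(Z_{N,\alpha/N})^2 = \big(N/(N-(N-1)\alpha)\big)^2$, again from Lemma \ref{ab_mean}.

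The one genuinely nonroutine step—and the crux of the whole argument—is spotting the density identity $P(Y_{N-1,p}=b)=(1-bp)\,a_b$; everything afterward is bookkeeping driven by the two known mean formulas. I expect the only place needing care to be the boundary term at $b=N$, where the factor $(1-Np)^{-1}$ in the Abelian weight is precisely what creates the $\tfrac{1}{1-Np}$ appearing in the final formula, so it must be treated honestly rather than absorbed into the generic summand.
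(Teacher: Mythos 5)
Your proposal is correct and is essentially the paper's own argument: both rest on splitting off a factor of $(1-bp)$ to relate the mean of the shifted Avalanche distribution to the first and second moments of the Abelian distribution, and then invoke Lemma \ref{ab_mean} and Lemma \ref{av_mean} to solve for $E(Z_{N,p}^2)$. The only difference is cosmetic: the paper relates $E(Y_{N,p})$ to $E(Z_{N+1,p})$ and $E(Z_{N+1,p}^2)$ and then shifts $N+1\mapsto N$, whereas you work directly with $Y_{N-1,p}$ and $Z_{N,p}$ (stating the same algebra as a pointwise density identity, with a welcome explicit check at the endpoint $b=N$).
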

 \begin{proof}
 \begin{eqnarray*}
 E(Y_{N,p}) &=& \sum\limits_{b=1}^{N+1} b^{b-1} {{N}\choose{b-1}}p^{b-1}(1-bp)^{(N-b+1)} \\
 &=& \sum\limits_{b=1}^{N+1} b^{b-1} {{N}\choose{b-1}}p^{b-1}(1-bp)^{(N-b)} - \sum\limits_{b=1}^{N+1}  p \times b^{b} {{N}\choose{b-1}}p^{b-1}(1-bp)^{(N-b)} \\
 &=& \frac{1}{C_{N+1,p}} E(Z_{N+1,p})- p \frac{1}{{C_{N+1,p}}}E({Z_{N+1,p}}^2)
 \end{eqnarray*}
 We know $E(Z_{N+1,p})$ from \ref{ab_mean}, 
  using \ref{av_mean} we can compute $E(Y_{N,p})$.
 Using the above two facts one finds $ E({Z_{N,p}}^{2})$. Since $V(Z_{N,p})= E({Z_{N,p}}^{2}) - {E(Z_{N,p})}^{2} $ , the variance too can be found from this.\\
 \end{proof}

 \section{Stirling numbers}
 Our chief goal , for the rest of this paper will be to find how the variance of the Abelian distribution behaves as $N$ goes to infinity. In order to do this we shall use the Stirling number of the first kind. The Stirling numbers were so named by N. Nielson (1906) in honor of James Stirling, who introduced them in his Methodus Differentialis (1730) \cite{tweddle2012james}, without using any notation for them. The notation in this paper is due to J. Riordan \cite{riordan2012introduction}. This section gives some definitions, and results from  \cite{charalambides2005combinatorial}. We then proceed to state and prove a few Lemmas of our own \ref{lemma_J_5_1}, \ref{lemma_J_5_2}, \ref{lemma_J_6}. These final three will be used in the section titled Asymptotic behavior of the Variance of the Abelian Distribution.\\

We will use the notation$(x)_{n}$ for the polynomial $x(x-1)(x-2) \cdots (x-(n-1))$. This is called the factorial moment of order $n$.

The coefficients of such polynomials are called the Stirling numbers. Formally we have,
$(x-r)_{i}= \sum\limits_{j=0}^{i} s(i,j;r) x^{j} $. Set $s(0,0;r)=1$. For $i\ge j \ge 0$, $s(i,j;r)$ are called the non- centered Stirling numbers of the first kind.
We will be chiefly interested in r= 1, when
 $(x-1)_{i}= \sum\limits_{j=0}^{i} s(i,j;1) x^{j}$. When $i \ge j >0$ denote by ${\tau^{i}}_{j}$ the class of all possible subsets of  $\{1,2,3 \cdots i  \}$, which are of cardinality $j$. 
The following can be found in Chapter 2 of \cite{charalambides2005combinatorial}.
\begin{equation}\label{eq2_4}
  | s(i,j;1)| = (-1)^{i-j}  s(i,j;1)
\end{equation}
\begin{equation}\label{eq2_5}
 (x+i)_{i}= \sum\limits_{j=0}^{i} |s(i,j;1)| x^{j} 
\end{equation}
\begin{equation}\label{eq2_7}
 s(i,i;1)=1
\end{equation}
\begin{equation}\label{eq2_8}
 s(i,i-1;1)=-\frac{i(i+1)}{2}
\end{equation}

\begin{equation}\label{eq2_6}
|s(i,j;1)|= i! \sum _{\{r_1,r_2, \cdots , r_j \} \in {\tau^{i}}_{j}} \frac{1}{r_1 r_2 \cdots r_j} \text{, This holds for }i \ge j >0 
\end{equation}

Now we make some definitions, and prove some results of interest to us.

\begin{defn} \label{definition_5_4}
Given positive integer $i$. $P_i$ is a polynomial of degree $i (i\ge 0)$ defined as $P_i(x)= \sum\limits_{j=0}^{i}s(i+2,j;1)x^{j}$.
$h_i$ is a polynomial of degree $i+2$ defined as $h_i(x)= x^{i+1}(\frac{(i+2)(i+3)}{2}-x)$.
\end{defn}

\begin{lem}\label{lemma_J_6}
$i, N$ be positive integers, $N-3 \ge i$. Then 
$$
P_i(N)={(N-1)}_{i+2}+h_i(N)
$$
Further when $N-3 \ge i \ge \sqrt{2N}$, $h_i(N) >0$, and also $2N^{i+3}>P_i(N)>{(N-1)}_{i+2} \ge 0 $
\end{lem}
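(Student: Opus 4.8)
The plan is to recognize $P_i$ as the factorial polynomial $(x-1)_{i+2}$ with its two top-degree terms stripped off, and then to identify those two terms with $h_i$ using the known values of the extreme Stirling numbers. Concretely, the defining relation $(x-1)_{i+2}=\sum_{j=0}^{i+2}s(i+2,j;1)x^{j}$ gives, upon subtracting the $j=i+1$ and $j=i+2$ contributions,
$$P_i(x)=\sum_{j=0}^{i}s(i+2,j;1)x^{j}=(x-1)_{i+2}-s(i+2,i+2;1)x^{i+2}-s(i+2,i+1;1)x^{i+1}.$$
By (\ref{eq2_7}) we have $s(i+2,i+2;1)=1$, and by (\ref{eq2_8}) applied with index $i+2$ we have $s(i+2,i+1;1)=-\frac{(i+2)(i+3)}{2}$. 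Substituting these, the two stripped terms collapse to exactly $x^{i+1}\!\left(\frac{(i+2)(i+3)}{2}-x\right)=h_i(x)$, so $P_i(x)=(x-1)_{i+2}+h_i(x)$; evaluating at $x=N$ yields the first assertion. This step is purely algebraic, carries the bulk of the content, and needs no hypothesis on the relative sizes of $i$ and $N$.

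For the remaining inequalities I would argue in the regime $\sqrt{2N}\le i\le N-3$ as follows. Writing $h_i(N)=N^{i+1}\!\left(\frac{(i+2)(i+3)}{2}-N\right)$, the hypothesis $i\ge\sqrt{2N}$ gives $i^2\ge 2N$, so $(i+2)(i+3)>i^2\ge 2N$ and the parenthesis is strictly positive; since $N^{i+1}>0$ this yields $h_i(N)>0$. For the lower bound, the hypothesis $i\le N-3$ forces the smallest factor $N-i-2$ of $(N-1)_{i+2}=(N-1)(N-2)\cdots(N-i-2)$ to be $\ge 1$, so this product of $i+2$ consecutive positive integers is nonnegative; adding the strictly positive $h_i(N)$ gives $P_i(N)>(N-1)_{i+2}\ge 0$. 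For the upper bound I would estimate crudely: every factor of $(N-1)_{i+2}$ is smaller than $N$, so $(N-1)_{i+2}<N^{i+2}$, while $i+3\le N$ gives $(i+2)(i+3)<N^2$ and hence $h_i(N)<\frac{(i+2)(i+3)}{2}N^{i+1}<\frac12 N^{i+3}$. Combining, $P_i(N)<N^{i+2}+\frac12 N^{i+3}\le\frac32 N^{i+3}<2N^{i+3}$, the penultimate step using $N\ge 1$.

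The estimates are elementary, so there is no serious analytic obstacle; the real work is the identification $P_i(x)=(x-1)_{i+2}+h_i(x)$, where the most likely place to slip is the index shift in applying (\ref{eq2_8}) (reading off $s(i+2,i+1;1)$ rather than $s(i+2,i;1)$), which I would double-check carefully. The only other point worth flagging is that both hypotheses are genuinely needed: $i\ge\sqrt{2N}$ is used solely to make $h_i(N)$ positive, whereas $i\le N-3$ is used twice—to keep $(N-1)_{i+2}$ a product of positive integers and to bound $(i+2)(i+3)$ by $N^2$.
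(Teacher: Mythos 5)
Your proposal is correct, and it is precisely the ``straightforward calculation'' that the paper omits: strip the top two coefficients of $(x-1)_{i+2}$ using $s(i+2,i+2;1)=1$ from (\ref{eq2_7}) and $s(i+2,i+1;1)=-\frac{(i+2)(i+3)}{2}$ from (\ref{eq2_8}) to get $P_i(x)=(x-1)_{i+2}+h_i(x)$, then obtain the inequalities from the hypotheses $i\ge\sqrt{2N}$ (positivity of $h_i(N)$) and $i\le N-3$ (positivity of every factor of $(N-1)_{i+2}$ and the bound $(i+2)(i+3)<N^2$). Your index shift in applying (\ref{eq2_8}) is handled correctly, and your crude bounds $(N-1)_{i+2}<N^{i+2}$ and $h_i(N)<\tfrac12 N^{i+3}$ give the stated $2N^{i+3}$ bound with room to spare.
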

\begin{proof}
The proof is a straightforward calculation.
\end{proof}

\begin{lem}\label{lemma_J_5_1}
There exists a polynomial $f(x)$, of degree $4$,  such that for all integers with  $i,j$ with $i \ge j \ge 0$, we have $f(i) \ge 0$ and
\begin{equation}
  |s(i+2,j;1)| \leq |s(i,j;1)|f(i).
\end{equation}
\end{lem}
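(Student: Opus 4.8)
The plan is to pass from the Stirling numbers to elementary symmetric functions and thereby reduce the claim to a single ratio estimate. By \eqref{eq2_6}, for $i \ge j \ge 1$ we have $|s(i,j;1)| = i!\,e_j$, where $e_j = e_j(1,\tfrac12,\dots,\tfrac1i)$ is the $j$-th elementary symmetric function of the reciprocals $1,\tfrac12,\dots,\tfrac1i$; I set $e_0=1$ and $e_{-1}=e_{-2}=0$, and note $|s(i,0;1)|=i!$ so the identity persists at $j=0$. Since the asserted inequality involves only the ratio $|s(i+2,j;1)|/|s(i,j;1)|$, rewriting everything through the $e_j$ is the natural first move.

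Next I would record a two-step recurrence. Adjoining the new variables $\tfrac{1}{i+1}$ and $\tfrac{1}{i+2}$ via the identity $e_j(\dots,y)=e_j(\dots)+y\,e_{j-1}(\dots)$ and clearing $(i+2)!=(i+1)(i+2)\,i!$ gives
\begin{equation*}
|s(i+2,j;1)| = |s(i,j-2;1)| + (2i+3)\,|s(i,j-1;1)| + (i+1)(i+2)\,|s(i,j;1)|.
\end{equation*}
(The same relation follows by iterating the signed recurrence $s(i+1,j;1)=s(i,j-1;1)-(i+1)s(i,j;1)$ and passing to absolute values with \eqref{eq2_4}.) Dividing by $|s(i,j;1)|$ reduces the lemma to bounding $e_{j-1}/e_j$ and $e_{j-2}/e_j$ by a polynomial in $i$ that is independent of $j$.

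The heart of the matter is then the uniform estimate $\dfrac{e_{j-1}}{e_j}\le \dfrac{i(i+1)}{2}$ for all $1\le j\le i$, which I would prove with Newton's inequalities: because $\prod_{k=1}^{i}\bigl(t+\tfrac1k\bigr)$ has only real negative roots, the normalized coefficients $p_j:=e_j/\binom{i}{j}$ are log-concave, so the ratios $p_{j-1}/p_j$ are nondecreasing in $j$ and hence maximal at $j=i$, where a direct computation yields $p_{i-1}/p_i=(i+1)/2$. Combined with $\binom{i}{j-1}/\binom{i}{j}=j/(i-j+1)\le i$ this gives the bound. (Alternatively one may avoid Newton: the map sending a $(j-1)$-set $S$ to $S\cup\{m\}$, with $m$ the least index not in $S$, is at most $j$-to-one and yields $\tfrac1i e_{j-1}\le j\,e_j$, i.e. $e_{j-1}/e_j\le ij$, which is equally adequate.)

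Finally I would assemble the bound. Writing $M=i(i+1)/2$, the ratio estimate gives $e_{j-1}/e_j\le M$ and, iterating it at index $j-1$, $e_{j-2}/e_j=(e_{j-2}/e_{j-1})(e_{j-1}/e_j)\le M^2$, whence
\begin{equation*}
\frac{|s(i+2,j;1)|}{|s(i,j;1)|} \le M^2 + (2i+3)M + (i+1)(i+2) =: f(i),
\end{equation*}
a polynomial in $i$ of degree $4$ with all terms nonnegative, so $f(i)\ge 0$. It remains only to dispose of the degenerate cases $j\in\{0,1\}$ and the boundary $i=0$ directly, where the vanishing of $e_{-1},e_{-2}$ makes the inequality immediate. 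I expect the uniform ratio estimate $e_{j-1}/e_j\le M$ to be the main obstacle, since it is the only step requiring a genuine inequality (log-concavity, or the counting injection); the recurrence and the final assembly are routine bookkeeping.
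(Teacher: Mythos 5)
Your proof is correct, and while its skeleton coincides with the paper's, the key estimate is obtained by a genuinely different method. The recurrence you derive,
\begin{equation*}
|s(i+2,j;1)| \;=\; |s(i,j-2;1)| \,+\, (2i+3)\,|s(i,j-1;1)| \,+\, (i+1)(i+2)\,|s(i,j;1)|,
\end{equation*}
is exactly the paper's four-way decomposition of \eqref{eq2_6} (splitting the $j$-subsets of $\{1,\dots,i+2\}$ according to whether they contain $i+1$, $i+2$, both, or neither), so that step is common to both arguments. The real difference is how the lower-order terms are absorbed. The paper does it combinatorially, with the map $F_{i,j}$ that adjoins the least missing element to a $(j-1)$-set and is at most $j$-to-one --- which is precisely the ``alternative'' you mention in parentheses, and in your notation gives $e_{j-1}/e_j \le i(i+1)$ --- whereas your main route invokes Newton's inequalities: since $\prod_{k=1}^{i}\bigl(t+\tfrac1k\bigr)$ has only real roots, the normalized coefficients $p_j=e_j/\binom{i}{j}$ are log-concave, the ratios $p_{j-1}/p_j$ are nondecreasing in $j$ and hence maximized at $j=i$, where the value is $(i+1)/2$; combined with $\binom{i}{j-1}/\binom{i}{j}\le i$ this yields the uniform bound $e_{j-1}/e_j\le i(i+1)/2$. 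Your approach buys a sharper constant and concentrates the one genuine inequality into a classical result; the paper's map argument is longer but elementary and self-contained, needing nothing beyond \eqref{eq2_6}. Both assemblies produce a legitimate degree-$4$ polynomial (yours is $f(i)=M^2+(2i+3)M+(i+1)(i+2)$ with $M=i(i+1)/2$; the paper's is $(x+1)(x+2)+2x(x+1)(x+2)+x(x-1)(x+1)(x+2)+4$), and your treatment of the boundary cases is sound: for $j=0$ only the last term of the recurrence survives, for $j=1$ the $e_{-1}$ term vanishes while $e_0/e_1\le 1\le M$ for $i\ge 1$ (indeed Newton's monotonicity already covers $j=1$), and at $i=j=0$ one checks $|s(2,0;1)|=2=f(0)\,|s(0,0;1)|$, so the inequality holds with equality there.
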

\begin{proof}
See Appendix for Proof
\end{proof}

Before ending the section we will state one last technical Lemma that will find use in the next section.
\begin{lem}\label{lemma_J_5_2}
When $0  \le i < \sqrt{2N}$,  $\prod_{j=1}^{i}(1+\frac{j}{N}) \le e^{2}$.
\end{lem}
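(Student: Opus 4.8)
The plan is to convert the product into a sum by taking logarithms, bound each term linearly, and then use the constraint $i < \sqrt{2N}$ to control the resulting sum. First I would write
$$
\log \prod_{j=1}^{i}\left(1+\frac{j}{N}\right) = \sum_{j=1}^{i}\log\left(1+\frac{j}{N}\right),
$$
and apply the elementary inequality $\log(1+x) \le x$, valid for every $x > -1$ (here $x = j/N > 0$). This bounds the sum above by $\sum_{j=1}^{i} \frac{j}{N} = \frac{i(i+1)}{2N}$.

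Next I would invoke the hypothesis $i < \sqrt{2N}$, which yields both $i^2 < 2N$ and $i < \sqrt{2N}$. Splitting $i(i+1) = i^2 + i$ gives
$$
\frac{i(i+1)}{2N} = \frac{i^2}{2N} + \frac{i}{2N} < 1 + \frac{1}{\sqrt{2N}} \le 1 + \frac{1}{\sqrt{2}} < 2,
$$
where the penultimate step uses that $N \ge 1$. Exponentiating then gives $\prod_{j=1}^{i}\left(1+\frac{j}{N}\right) \le e^{i(i+1)/(2N)} < e^{2}$, which is the claim. The degenerate case $i = 0$ is the empty product, equal to $1 \le e^{2}$, and is covered trivially.

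The computation is elementary throughout, so I do not expect a serious obstacle. The one point requiring care is the linear term $\frac{i}{2N}$ that arises because the sum produces $i(i+1)$ rather than $i^2$: a careless bound using only $i^2 < 2N$ would suggest an exponent below $1$, but one must confirm that the leftover $\frac{i}{2N}$ does not push the total exponent to or past $2$. This is precisely why I would record the estimate $\frac{1}{\sqrt{2N}} \le \frac{1}{\sqrt{2}} < 1$ explicitly, as it guarantees the bound $e^{2}$ with room to spare for every positive integer $N$.
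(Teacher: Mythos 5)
Your proof is correct and follows essentially the same route as the paper: take logarithms, apply $\ln(1+x) \le x$, and bound the resulting sum by $\frac{i(i+1)}{2N} \le 2$. The only difference is that you spell out the final estimate $\frac{i^2}{2N} + \frac{i}{2N} < 1 + \frac{1}{\sqrt{2}} < 2$ explicitly, a detail the paper's proof leaves implicit.
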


\begin{proof}
It would be enough to show $\sum_{j=1}^{i}\ln (1+\frac{j}{N}) \le 2$. This is exactly what we do.
\begin{align*}
    \sum_{j=1}^{i}\ln (1+\frac{j}{N}) &\le \sum_{j=1}^{i} \frac{j}{N} && \text{(Using the fact } \ln (1+x) \le x, \text {when} \;x >-1)\\
     &= \frac{1}{2N} \times(i)(i+1) \le 2     
\end{align*}
\end{proof}

\section{Asymptotic behavior of the variance of the Abelian Distribution}
We will see how the variance of $Z_{N,p}$ behaves as $N$ tends to infinity, here we take $p=\frac {\alpha}{N}$. Here is the result.
\begin{thm}
$Z_{N,p}$ be the Abelian distribution with parametres $p$ and $N$. For $ 0 <\alpha < 1$, $\lim_{N \to +\infty} V(Z_{N, \frac{\alpha}{N}})= \frac{\alpha}{(1- \alpha)^{3}} $
\end{thm}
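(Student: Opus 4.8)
The plan is to start from the closed forms for the second moment and the mean (Lemma~\ref{ab_mean} and the preceding theorem), specialize to $p=\alpha/N$, and handle the two pieces $\lim E(Z_{N,\alpha/N}^2)$ and $\lim E(Z_{N,\alpha/N})^2$ separately. The mean is immediate: $E(Z_{N,\alpha/N})=\frac{N}{N-(N-1)\alpha}=\frac{1}{1-\alpha+\alpha/N}\to\frac{1}{1-\alpha}$, so $E(Z)^2\to\frac{1}{(1-\alpha)^2}$. All the work is in the second moment.

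Substituting $p=\alpha/N$ I would write $E(Z_{N,p}^2)=\frac{C_{N,p}}{p}\,B_N$, where $\frac{C_{N,p}}{p}=\frac{N}{\alpha}\cdot\frac{1-\alpha}{1-\alpha+\alpha/N}\sim\frac{N}{\alpha}$ and $B_N=\frac{\alpha}{1-\alpha}-S_N$ with $S_N=\sum_{i=1}^{N-1}\alpha^i\frac{(N-1)_i}{N^i}$, using $\frac{(N-1)!}{(N-1-i)!}=(N-1)_i$ and $p^i=\alpha^i/N^i$. Since $\frac{\alpha}{1-\alpha}=\sum_{i\ge1}\alpha^i$, I can rewrite $B_N=\sum_{i=1}^{N-1}\alpha^i\bigl[1-\frac{(N-1)_i}{N^i}\bigr]+\frac{\alpha^N}{1-\alpha}$, where the last (tail) term is exponentially negligible even after multiplying by $N$. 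Because $\frac{C_{N,p}}{p}$ grows like $N$, the crux is to show that $N B_N$ has the finite limit $\frac{\alpha}{(1-\alpha)^3}$; this will give $E(Z^2)\to\frac{1}{(1-\alpha)^3}$.

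To identify $\lim N B_N$ I would invoke the Stirling expansion $\frac{(N-1)_i}{N^i}=\sum_{j=0}^{i}s(i,j;1)N^{j-i}$, whose top two coefficients are $s(i,i;1)=1$ (eq.~\ref{eq2_7}) and $s(i,i-1;1)=-\frac{i(i+1)}{2}$ (eq.~\ref{eq2_8}). This gives $1-\frac{(N-1)_i}{N^i}=\frac{i(i+1)}{2N}-N^{-i}P_{i-2}(N)$, hence $N\bigl[1-\frac{(N-1)_i}{N^i}\bigr]=\frac{i(i+1)}{2}-N^{1-i}P_{i-2}(N)$. The principal terms sum to the target value, since a short generating-function computation yields $\sum_{i\ge1}\alpha^i\frac{i(i+1)}{2}=\frac{\alpha}{(1-\alpha)^3}$. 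What remains is to show that the remainder $\sum_{i=1}^{N-1}\alpha^i N^{1-i}P_{i-2}(N)$ vanishes and that truncating the principal series at $N-1$ costs nothing.

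The main obstacle is precisely this uniform control of the remainder, and it is here that the technical lemmas of Section~3 are meant to be used. I would split the sum at $i=\sqrt{2N}$: for $\sqrt{2N}\le i\le N-1$, Lemma~\ref{lemma_J_6} gives $0<P_{i-2}(N)<2N^{i+1}$, so each term is at most $2N^2\alpha^i$ and the tail $\frac{2N^2\alpha^{\sqrt{2N}}}{1-\alpha}\to0$; for $0\le i<\sqrt{2N}$, the Stirling-coefficient estimate of Lemma~\ref{lemma_J_5_1} combined with the product bound of Lemma~\ref{lemma_J_5_2} controls $N^{1-i}\lvert P_{i-2}(N)\rvert$ by a quantity of order $1/N$ summable against $\alpha^i$, making this part $O(1/N)$. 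Alternatively, the elementary Weierstrass inequality $\prod_{k=1}^{i}(1-\tfrac{k}{N})\ge 1-\frac{i(i+1)}{2N}$ yields the uniform domination $0\le N\alpha^i\bigl[1-\frac{(N-1)_i}{N^i}\bigr]\le\alpha^i\frac{i(i+1)}{2}$, so dominated convergence for series delivers $N B_N\to\frac{\alpha}{(1-\alpha)^3}$ directly. Either way, combining $E(Z^2)\to\frac{1}{(1-\alpha)^3}$ with $E(Z)^2\to\frac{1}{(1-\alpha)^2}$ gives $V\to\frac{1}{(1-\alpha)^3}-\frac{1}{(1-\alpha)^2}=\frac{\alpha}{(1-\alpha)^3}$, as claimed.
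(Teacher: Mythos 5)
Your proposal is correct, and its main line is essentially the paper's own proof in lightly reorganized form: you expand $(N-1)_i/N^i$ in Stirling numbers, cancel the top coefficient $s(i,i;1)=1$ against the geometric series, identify the $s(i,i-1;1)=-\frac{i(i+1)}{2}$ terms as the principal contribution (the paper's $J_3$), and kill the remainder by splitting at $i=\sqrt{2N}$ using Lemmas \ref{lemma_J_5_1}, \ref{lemma_J_5_2} and \ref{lemma_J_6} --- exactly the paper's $J_5$/$J_6$ argument, the only cosmetic difference being that you absorb the factor $N$ from $C_{N,p}/p$ into the sum and study $NB_N$. One small indexing slip: with your shift the polynomial attached to index $i$ is $P_{i-2}$, so Lemma \ref{lemma_J_6} applies when $i-2\ge\sqrt{2N}$, not $i\ge\sqrt{2N}$; split at $\sqrt{2N}+2$ (or re-index) and nothing else changes. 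What is genuinely different --- and worth emphasizing --- is your closing ``alternatively'' remark. Writing $\frac{(N-1)_i}{N^i}=\prod_{k=1}^{i}\bigl(1-\frac{k}{N}\bigr)$ and invoking the Weierstrass inequality $\prod_{k=1}^{i}\bigl(1-\frac{k}{N}\bigr)\ge 1-\frac{i(i+1)}{2N}$ gives the two-sided bound $0\le N\alpha^i\bigl[1-\frac{(N-1)_i}{N^i}\bigr]\le\alpha^i\frac{i(i+1)}{2}$ together with pointwise convergence to $\alpha^i\frac{i(i+1)}{2}$ for each fixed $i$, so dominated convergence for series yields $NB_N\to\sum_{i\ge1}\alpha^i\frac{i(i+1)}{2}=\frac{\alpha}{(1-\alpha)^3}$ in one stroke. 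That route needs no Stirling numbers at all: it replaces Definition \ref{definition_5_4}, Lemmas \ref{lemma_J_6}, \ref{lemma_J_5_1}, \ref{lemma_J_5_2}, and the appendix proof of Lemma \ref{lemma_J_5_1} by a one-line product inequality, at the cost of invoking dominated convergence rather than explicit estimates. The paper's machinery buys quantitative error rates (the $O(N^2\alpha^{\sqrt{2N}})$ and $O(1/N)$ bounds), which would matter if one wanted a rate of convergence for the variance; your alternative buys brevity and transparency, and is the proof I would recommend writing up.
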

\begin{proof}
Let $p= \frac{\alpha}{N}$. Restating \ref{eq2_2} we get
$$
 E({Z_{N,p}}^{2})= \frac{{C_{N,p}}}{p}[\frac{1}{1-Np}-1-\sum\limits_{i=1}^{N-1} (N-1)_{i}p^i]
$$
The fact that $s(i,i,1)=1$ is used in the following calculations
\begin{eqnarray*}
 E({Z_{N,\frac{\alpha}{N}}}^{2}) &=&  \frac{{C_{N,p}}}{p}[\frac{1}{1-Np}-1-\sum\limits_{i=1}^{N-1} p^i\sum\limits_{j=0}^{i}s(i,j;1)N^{j}]\\
 &=&  \frac{{C_{N,p}}}{p}[\sum\limits_{i=1}^{\infty}(Np)^i-\sum\limits_{i=1}^{N-1} p^i\sum\limits_{j=0}^{i}s(i,j;1)N^{j}] \\
 &=& \frac{{C_{N,p}}}{p}[\sum\limits_{i=N}^{\infty}(Np)^i-\sum\limits_{i=1}^{N-1} p^i\sum\limits_{j=0}^{i-1}s(i,j;1)N^{j}] \\
 &=& \frac{{C_{N,p}}}{p}[\frac{{\alpha}^N}{1-\alpha}-\sum\limits_{i=1}^{N-1} p^i\sum\limits_{j=0}^{i-1}s(i,j;1)N^{j}]
 \end{eqnarray*}
 Hence we have 
 \begin{equation}\label{eq2_10}
  E({Z_{N,\frac{\alpha}{N}}}^{2}) = C_{N,p}[J_1- J_2]
\end{equation}
where $J_1= \frac{{\alpha}^N}{\frac{\alpha}{N}(1-\alpha)}$, $J_2 = \frac{1}{p}\sum\limits_{i=1}^{N-1} p^i\sum\limits_{j=0}^{i-1}s(i,j;1)N^{j} $\\
We easily observe
\begin{eqnarray*}
\lim_{N \to +\infty}C_{N,p}=lim_{N \to +\infty}C_{N,\frac{\alpha}{N}} =1, \;
\lim_{N \to +\infty}J_1= 0.
\end{eqnarray*}
Now to focus on $J_2$
\begin{eqnarray*}
 J_2 &=&  \frac{1}{p}p\sum\limits_{i=0}^{N-2} p^i\sum\limits_{j=0}^{i}s(i+1,j;1)N^{j}\\
 &=& \sum\limits_{i=0}^{N-2} p^i\sum\limits_{j=0}^{i}s(i+1,j;1)N^{j} \\
 &=& \sum\limits_{i=0}^{N-2}p^{i}N^{i}s(i+1,i;1) \; \;+\; \; \sum\limits_{i=1}^{N-2} p^i\sum\limits_{j=0}^{i-1}s(i+1,j;1)N^{j}
 \end{eqnarray*}
 So we have 
 \begin{equation}\label{eq2_11}
  J_2=J_3+ J_4
\end{equation}
where $J_3=\sum\limits_{i=0}^{N-2}p^{i}N^{i}s(i+1,i;1) $, and $J_4=\sum\limits_{i=1}^{N-2} p^i\sum\limits_{j=0}^{i-1}s(i+1,j;1)N^{j} $\\
\begin{eqnarray*}
 J_3 &=& \sum\limits_{i=0}^{N-2} {\alpha}^{i}s(i+1,i;1) 
= - \sum\limits_{i=0}^{N-2} {\alpha}^{i}\frac{(i+1)(i+2)}{2}
 \end{eqnarray*}
This yields
$$
lim_{N \to +\infty}J_3= -\frac{1}{(1-\alpha)^{3}}
$$
\begin{eqnarray*}
J_4 &=& \sum\limits_{i=1}^{N-2} p^i\sum\limits_{j=0}^{i-1}s(i+1,j;1)N^{j}\\
 &=& p \sum\limits_{i=0}^{N-3} p^i\sum\limits_{j=0}^{i}s(i+2,j;1)N^{j}\\
 &=& p \sum\limits_{i=0}^{\sqrt{2N}-1} p^i\sum\limits_{j=0}^{i}s(i+2,j;1)N^{j} +
p \sum\limits_{i=\sqrt{2N}}^{N-3} p^i\sum\limits_{j=0}^{i}s(i+2,j;1)N^{j}\\
&=& J_5+ J_6,
\end{eqnarray*}
where $J_5=\sum\limits_{i=0}^{\sqrt{2N}-1} p^i\sum\limits_{j=0}^{i}s(i+2,j;1)N^{j}$ and $J_6=p \sum\limits_{i=\sqrt{2N}}^{N-3} p^i\sum\limits_{j=0}^{i}s(i+2,j;1)N^{j} $.
\begin{description}
\item[We next show $lim_{N \to +\infty} J_5 =0, f$ below is defined in \ref{lemma_J_5_1}.]
\begin{align*}
|J_5| &= |p(\sum\limits_{i=0}^{\sqrt{2N}-1} p^i\sum\limits_{j=0}^{i}s(i+2,j;1)N^{j})|\\
&\le p(\sum\limits_{i=0}^{\sqrt{2N}-1} p^i\sum\limits_{j=0}^{i}|s(i+2,j;1)|N^{j})\\
&\le p(\sum\limits_{i=0}^{\sqrt{2N}-1} p^i\sum\limits_{j=0}^{i}f(i)|s(i,j;1)|N^{j}) \\
&\le p(\sum\limits_{i=0}^{\sqrt{2N}-1} p^if(i)\prod_{j=1}^{i}(N+j))\\
&\le \frac{\alpha}{N}(\sum\limits_{i=0}^{\sqrt{2N}-1} \alpha^if(i)\prod_{j=1}^{i}(1+\frac{j}{N}))&& \text{(Putting } p =\frac{\alpha}{N})\\
&\le \frac{\alpha}{N}(\sum\limits_{i=0}^{\sqrt{2N}-1} \alpha^if(i)e^{2})
\end{align*}
Since the last expression is an upper bound for $|J_5|$, and it approaches zero as $N$ approaches $\infty$, we are done.

\item[Next we will show $lim_{N \to +\infty} J_6 =0 $, the $P_i$ below is defined in \ref{lemma_J_6}]
\begin{align*}
|J_6| &= |p(\sum\limits_{i=\sqrt{2N}}^{N-3} p^i P_i(N))|\\
&= p(\sum\limits_{i=\sqrt{2N}}^{N-3} p^i P_i(N))\\
&\le p(\sum\limits_{i=\sqrt{2N}}^{N-3} p^i 2 N^{i+3} )\\
&= \frac{\alpha}{N}(\sum\limits_{i=\sqrt{2N}}^{N-3} \alpha^i 2 N^{3} )&& \text{(Putting } p =\frac{\alpha}{N})\\
&= 2N^{2} \alpha (\sum\limits_{i=\sqrt{2N}}^{N-3} \alpha^i )
= 2N^{2} \alpha^{\sqrt{2N}+1} \frac{1-\alpha^{(N-3)-2\sqrt{N}}}{1-\alpha}.
\end{align*}
Since the last expression is an upper bound for $|J_6|$, and it approaches zero as $N$ approaches $\infty$, we are done.
\end{description}

 From (\ref{eq2_10}), and the estimations of $J_1,\; J_3, \; J_4$,
we see 
$$
 lim_{N \to +\infty} E({Z_{N,\frac{\alpha}{N}}}^{2})= (1-\alpha)^{-3}
$$
Also
$$
 lim_{N \to +\infty} {E(Z_{N,\frac{\alpha}{N}})}^{2}=   lim_{N \to +\infty} \frac{1}{1-\frac{(N-1)\alpha}{N}}= \frac{1}{(1-\alpha)^2}
$$
From this the theorem follows.

\end{proof}

\section{Remarks}
The choice of $\alpha$ which is most interesting for studying the biological phenomenon, are those values close to $1$, these are the values for which the distribution behaves closest to a power law as has been show in \cite{eurich2002finite}. Also Levina \cite{levina2007dynamical} shows these are the values of $\alpha$  the system settles to if one starts with dynamical synapses with a different suitable $\alpha$. This result shows that such systems have very high variance when we deal with a lot of neurons. This is consistent with a power law distribution of exponent $-\frac{3}{2}$ as observed in the experiments of \cite{beggs2004neuronal}, \cite{beggs2003neuronal}. An explicit expression for Variance , as has been found here, often proves useful for inferring details about parameters from available data.
\begin{appendices}
\section{Methods}
Here, we give proofs of certain lemmas, that we omitted from the main text.
 \begin{proof}{ of \ref{av_mean}}
 Let $(U_{i})_{i=1}^{N}$ be i.i.d uniformly distributed in $[0,1]$, $N$ is a fixed positive integers,  $p$ is a fixed number in $(0,\frac{1}{N})$  .  From this one may recursively construct the random sequences $(\epsilon_{i,N})_{i=1}^{N}$ as follows

\begin{align*}
   \epsilon_{0,N}= 1 , \;
   \epsilon_{1,N}=\sum\limits_{j=1}^{N} \mathbbm{1}_{[1-p,1]}(U_j) \\
   \epsilon_{k,N}= \sum\limits_{j=1}^{N}\mathbbm{1}_{[1-p\sum\limits_{i=0}^{k-1} \epsilon_{i,N}, \; 1-p\sum\limits_{i=0}^{k-2} \epsilon_{i,N} ]} (U_j),\;
 S_{N,p} = \sum\limits_{i=1}^{N} \epsilon_{i,N}
\end{align*}

 It was shown in \cite{denker2014ergodicity} that $S_{N,p}$ has the same distribution as the Avalanche distribution.
 Thus it  is suffice to prove that
 $ E(\epsilon_{k,N})=  \frac{N !}{(N-k)!} p^k, \; \; \forall k \geq 1$, \\ We do so by induction for $k=1$, 
 
 \begin{eqnarray*}
     E(\epsilon_{1,N}) &=& \sum\limits_{i=1}^{N} P(U_i > 1-p)  
       = \sum\limits_{i=1}^{N} p
      = Np
\end{eqnarray*}
By Inductive hypothesis the result holds for $k=k-1$, now for $k=k$,
\begin{eqnarray*}
     E(\epsilon_{k,N}) &=& \sum\limits_{i=1}^{N} P(p\sum\limits_{m=0}^{k-1} \epsilon_{m,N} \ge 1-U_i \ge p\sum\limits_{m=0}^{k-2}\epsilon_{m,N})  \\ 
      & =& \sum\limits_{i=1}^{N} \sum\limits_{j=1}^{N-1} jp \; P(\epsilon_{k-1,N-1}= j)\\
      & =& Np \sum\limits_{j=1}^{N-1} j P(\epsilon_{k-1,N-1}= j)\;\; \; \; \; \; \; \textbf{ [Taking conditions on value of $\epsilon_{k-1,N}$}]\\
      & =& Np \frac{N-1 !}{(N-k )!} p^{k-1} \;\; \; \; \; \; \; \; \; \; \;\; \; \; \; \; \;\; \; \; \;  \textbf{[By Inductive Hypothesis]}\\
      & =& \frac{N!}{(N-k)!} p^{k}.
\end{eqnarray*}

\end{proof}

\begin{proof}{of \ref{lemma_J_5_1}}
For the moment , consider $i \ge j > 0$, the situation where $i \ge j=0$, will be treated at the end separately.
Using Equation  ~\ref{eq2_6}, we get 

\begin{eqnarray*}
|s(i+2,j;1)| & = & (i+2)! \sum _{\{r_1,r_2, \cdots , r_j \} \in {\tau^{i}}_{j}} \frac{1}{r_1 r_2 \cdots r_j} \\
& &+ (i+2)!\sum _{\{r_1,r_2, \cdots , r_{j-1} \} \in {\tau^{i}}_{j-1}} \frac{1}{(i+1)r_1 r_2 \cdots r_{j-1}}  \\
& & +  (i+2)! \sum _{\{r_1,r_2, \cdots , r_{j-1} \} \in {\tau^{i}}_{j-1}} \frac{1}{(i+2)r_1 r_2 \cdots r_{j-1}}\\
& & + (i+2)!\sum _{\{r_1,r_2, \cdots , r_{j-2} \} \in {\tau^{i}}_{j-2}} \frac{1}{(i+1)(i+2)r_1 r_2 \cdots r_{j-2}}\label{initial equation}
\end{eqnarray*}

  Now for $i \ge j >0$ consider the function $F_{i.j}:\tau^{i}_{j-1}\rightarrow \tau^{i}_{j}$ ($F_{i,j}$ is a function which takes sets to sets)defined as 
 $$
 F_{i,j}(\{ r_1,r_2, \cdots , r_{j-1} \})= \{ l, r_1,r_2, \cdots , r_{j-1} \}
 $$
 where $l$ is the least number in $\{ 1,2, \cdots , i \}$, which is not in $\{ r_1,r_2, \cdots , r_{j-1} \}$

\begin{equation*}
  \forall K   \in \ \tau^{i}_{j}, \; \; \; |{F_{i,j}}^{-1}(K)| \le j \le i
\end{equation*}
Also 
\begin{equation*}
  \forall \{ r_1,r_2, \cdots , r_{j-1} \} \in \tau^{i}_{j-1}, \;\; \; \frac{1}{ (i+1)r_1 r_2 \cdots r_{j-1}} \le 
  \frac{1}{\prod_{g \in F_{i,j}(\{ r_1,r_2, \cdots , r_{j-1} \})} g}
\end{equation*}
It follows that 

\begin{eqnarray*}
\sum _{\{r_1,r_2, \cdots , r_{j-1} \} \in {\tau^{i}}_{j-1}} \frac{1}{(i+1)r_1 r_2 \cdots r_{j-1}} 
& \le & \sum _{\{r_1,r_2, \cdots , r_{j-1} \} \in {\tau^{i}}_{j-1}} \frac{1}{\prod_{g \in F_{i,j}(\{ r_1,r_2, \cdots , r_{j-1} \})} g}\\
& \le &  \sum _{\{r_1,r_2, \cdots , r_j \} \in {\tau^{i}}_{j}} |{F_{i,j}}^{-1}(\{r_1,r_2, \cdots , r_j \})| \frac{1}{(r_1 r_2 \cdots r_j)}\\
& \le & i \sum _{\{r_1,r_2, \cdots , r_j \} \in {\tau^{i}}_{j}} \frac{1}{r_1 r_2 \cdots r_j}
\end{eqnarray*}
Thus
$$
\frac{i \times (i+2)!}{(i)!}|s(i,j;1)| \; \; \;  \geq (i+2)!\sum _{\{r_1,r_2, \cdots , r_{j-1} \} \in {\tau^{i}}_{j-1}} \frac{1}{(i+1)r_1 r_2 \cdots r_{j-1}}
$$.
Similarly 
$$
\frac{i \times (i+2)!}{(i)!}|s(i,j;1)|\; \; \geq (i+2)!\sum _{\{r_1,r_2, \cdots , r_{j-1} \} \in {\tau^{i}}_{j-1}} \frac{1}{(i+2)r_1 r_2 \cdots r_{j-1}}
$$
and 
$$
\frac{i \times (i-1) \times (i+2)!}{(i)!}|s(i,j;1)| \; \; \; \geq (i+2)!\sum _{\{r_1,r_2, \cdots , r_{j-2} \} \in {\tau^{i}}_{j-2}} \frac{1}{(i+1)(i+2)r_1 r_2 \cdots r_{j-2}}
$$
Using the above three in the initial equation for $|s(i+2,j;1)|$, we get 
\begin{eqnarray}
|s(i+2,j;1)|  &\le&  ((i+1)(i+2)+2(i+1)(i+2)i + (i+1)(i+2)i(i-1))|s(i,j;1)|\\
&\le &((i+1)(i+2)+2(i+1)(i+2)i + (i+1)(i+2)i(i-1)+4)|s(i,j;1)| \label{appendix_eq_1}
\end{eqnarray}
The polynomial $((x+1)(x+2)+2(x+1)(x+2)x + (x+1)(x+2)x(x-1))+4$ is defined as $f$, we have shown above that it satisfies the prescribed properties for $i \ge j >0$.\\

When $i>j=0$, $|s(i+2,0;1)|= (i+1)(i+2)|s(i,0;1)|$, when $i=j=0$, $s(2,0;1) = 4 < f(0)s(2,0;1)$. So \ref{appendix_eq_1} still holds.
\end{proof}
\end{appendices}

\section*{Acknowledgments}
I would like to thank Prof. Manfred Denker for his lucid explanations and guidance. 

\bibliographystyle{plain}
\bibliography{bibf}

\begin{thebibliography}{10}

\bibitem{beggs2003neuronal}
John~M Beggs and Dietmar Plenz.
\newblock Neuronal avalanches in neocortical circuits.
\newblock {\em The Journal of Neuroscience}, 23(35):11167--11177, 2003.

\bibitem{beggs2004neuronal}
John~M Beggs and Dietmar Plenz.
\newblock Neuronal avalanches are diverse and precise activity patterns that
  are stable for many hours in cortical slice cultures.
\newblock {\em The Journal of Neuroscience}, 24(22):5216--5229, 2004.

\bibitem{charalambides2005combinatorial}
Charalambos~A Charalambides.
\newblock {\em Combinatorial methods in discrete distributions}, volume 600.
\newblock John Wiley \& Sons, 2005.

\bibitem{denker2014ergodicity}
Manfred Denker and Ana Rodrigues.
\newblock Ergodicity of avalanche transformations.
\newblock {\em Dynamical Systems}, 29(4):517--536, 2014.

\bibitem{eurich2002finite}
Christian~W Eurich, J~Michael Herrmann, and Udo~A Ernst.
\newblock Finite-size effects of avalanche dynamics.
\newblock {\em Physical review E}, 66(6):066137, 2002.

\bibitem{levina2008mathematical}
Anna Levina.
\newblock A mathematical approach to self-organized criticality in neural
  networks.
\newblock {\em Nieders. Staatsu. Universit{\"a}tsbibliothek G{\"o}ttingen.
  Dissertation (Ph. D. thesis), webdoc. sub. gwdg. de/diss/2008/levina/levina.
  pdf}, 2008.

\bibitem{levina2014abelian}
Anna Levina and J~Michael Herrmann.
\newblock The abelian distribution.
\newblock {\em Stochastics and Dynamics}, 14(03):1450001, 2014.

\bibitem{levina2007dynamical}
Anna Levina, J~Michael Herrmann, and Theo Geisel.
\newblock Dynamical synapses causing self-organized criticality in neural
  networks.
\newblock {\em Nature Physics}, 3(12):857--860, 2007.

\bibitem{WenboVLi}
V~Wenbo Li.
\newblock A note on avalanche distributions, ( adapted version of w.v. li’s
  comment on 12 december 2012).

\bibitem{petermann2009spontaneous}
Thomas Petermann, Tara~C Thiagarajan, Mikhail~A Lebedev, Miguel~AL Nicolelis,
  Dante~R Chialvo, and Dietmar Plenz.
\newblock Spontaneous cortical activity in awake monkeys composed of neuronal
  avalanches.
\newblock {\em Proceedings of the National Academy of Sciences},
  106(37):15921--15926, 2009.

\bibitem{riordan2012introduction}
John Riordan.
\newblock {\em Introduction to combinatorial analysis}.
\newblock Courier Corporation, 2012.

\bibitem{tweddle2012james}
Ian Tweddle.
\newblock {\em James Stirling’s methodus differentialis: an annotated
  translation of Stirling’s text}.
\newblock Springer Science \& Business Media, 2012.

\end{thebibliography}
\end{document}